\theoremstyle{plain}
\newtheorem{lem}{\emph{Lemma}}
\newtheorem{prop}{\emph{Proposition}}
\newtheorem*{defi}{\emph {Definition}}
\newtheorem*{rmk}{\emph{Remark}}
\newcommand{\al}{\alpha}
\newcommand{\lam}{\lambda}
\newcommand{\ep}{\epsilon}
\newcommand{\del}{\delta}
\newcommand{\ra}{\rightarrow}
\newcommand{\bN}{\mathbb{N}}
\newcommand{\bZ}{\mathbb{Z}}
\newcommand{\bR}{\mathbb{R}}
\newcommand{\bC}{\mathbb{C}}
\begin{document}

 \title{\textrm On the Roots of Characteristic Equations of Delay Differential Systems} 
 \author{Jia-Yuan Dai} 
 \date{ } 
 \maketitle


\begin{abstract}
We prove that characteristic equations of certain types of delay differential systems, under some mild conditions on their coefficients, can possess infinitely many complex roots.  
\end{abstract}

\noindent \textbf{A. Prelimilary}
\\
\\
Our motivation comes from the linear (single, complex, constant) time-delay complex differential system:
\begin{equation} \label{dde}
\dot{x}(t) = Ax(t) + Bx(t-\tau), \quad x(t) \in \bC^n
\end{equation} 
where $A$ and $B$ are $n$-by-$n$ matrices over $\bC$ and $\tau \in \bC \setminus \{0\}$ is a complex time-delay. The stablity of the zero solution is determined to the real parts of roots of the characteristic equation:
\begin{equation} \label{1}
f(\lam) : = \mathrm{det}\big( \lam \mathrm{id} - A - e^{- \tau \lam} B \big) = 0,
\end{equation}
after the exponential ansatz $x(t) = e^{\lam t} x_0$ is applied. We are interested in the question whether there exist infinitely many complex roots of $f$.
\\ 
\indent Our main observations are the following:
\begin{itemize}
\item[(i)] $f$ is an entire function;
\item[(ii)] for any $\ep > 0$, the growth rate of $f$ is bounded by $e^{|\lam|^{1+\ep}}$ for all $\lam \in \bC$ with $|\lam|$ sufficiently large. 
\end{itemize}
We note that (ii) follows directly by using triangle inequality.
\begin{defi}
Let $f$ be an entire function, the \textbf{order} of $f$, denoted by $\mathrm{ord}(f)$, is the infimum of $\al > 0$ such that there exists $R > 0$ such that $|f(\lam)| \le e^{|\lam|^\al}$ holds for all $ \lam \in \bC$ with $|\lam| \ge R$.
\end{defi}
Hence the observation (ii) indicates that $\mathrm{ord}(f) \le 1$. Now finiteness of $\mathrm{ord}(f)$ reminds us a dichotomy.
\begin{lem}[Theorem 16.13 in \cite{BaNe10}]  \label{lem1}
Let $f$ be an entire function and of finite order, then 
\begin{itemize}
\item[(i)] either $f(\lam) = 0$ possesses infinitely many roots in $\bC$, 
\item[(ii)] or there exist complex polynomial $g(\lam)$ and $h(\lam)$ such that $h(0) = 0$ and
$$
f(\lam) = g(\lam) e^{h(\lam)}
$$
holds for all $\lam \in \bC$.
\end{itemize}
Furthermore, in the case (ii), we have $\mathrm{deg}(h) = \mathrm{ord}(f)$.
\end{lem}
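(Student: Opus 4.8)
The plan is to realize this dichotomy as a direct consequence of Hadamard's factorization theorem, which is the deep input the citation to \cite{BaNe10} supplies. Recall that theorem in the following form: if $f$ is entire, not identically zero, and of finite order $\rho = \mathrm{ord}(f)$, then
$$
f(\lam) = \lam^m e^{P(\lam)} \prod_{n \ge 1} E_q\!\left( \frac{\lam}{a_n} \right),
$$
where $m$ is the order of vanishing of $f$ at the origin, $(a_n)$ enumerates the nonzero zeros of $f$ counted with multiplicity, $E_q(z) = (1-z)\exp\big(\sum_{k=1}^{q} z^k/k\big)$ is the Weierstrass elementary factor of genus $q \le \lfloor \rho \rfloor$, and $P$ is a polynomial with $\deg P \le \rho$. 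Finiteness of the order is precisely the hypothesis under which this factorization is available, so the theorem applies verbatim to our setting.

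First I would perform the case split according to the cardinality of the zero set of $f$. If $f$ possesses infinitely many zeros in $\bC$, we are in alternative (i) and there is nothing to prove. Otherwise the zero set is finite, so the canonical product $\prod_n E_q(\lam/a_n)$ is a finite product of polynomials, hence itself a polynomial; multiplying by the monomial $\lam^m$ yields a polynomial $g_0(\lam)$, and we obtain $f(\lam) = g_0(\lam)\, e^{P(\lam)}$ with both $g_0$ and $P$ polynomials. To enforce the normalization $h(0) = 0$ demanded by the statement, I would absorb the constant factor $e^{P(0)}$ into the polynomial part: setting $h(\lam) := P(\lam) - P(0)$ and $g(\lam) := e^{P(0)} g_0(\lam)$ gives $f = g\, e^{h}$ with $h(0) = 0$, which is alternative (ii).

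It remains to verify the order formula $\deg(h) = \mathrm{ord}(f)$ in case (ii). Write $d := \deg h$; the case $d = 0$ forces $h \equiv 0$ by the normalization, so $f = g$ is a polynomial and both sides vanish. For $d \ge 1$, let $c_d \ne 0$ be the leading coefficient of $h$. The upper bound is immediate: $|f(\lam)| = |g(\lam)|\, e^{\mathrm{Re}\, h(\lam)} \le |g(\lam)|\, e^{|h(\lam)|} \le e^{|\lam|^{d+\ep}}$ for every $\ep > 0$ once $|\lam|$ is large, so $\mathrm{ord}(f) \le d$. For the matching lower bound I would restrict $\lam$ to the ray along which $c_d \lam^d$ is real and positive; there $\mathrm{Re}\, h(\lam) \sim |c_d|\,|\lam|^{d}$, whence $|f(\lam)| \ge e^{(|c_d|/2)|\lam|^{d}}$ for $|\lam|$ large (using $g \not\equiv 0$), and therefore $\mathrm{ord}(f) \ge d$. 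Combining the two bounds gives $\mathrm{ord}(f) = d = \deg(h)$.

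The genuinely hard content is entirely packaged into Hadamard's theorem, whose proof rests on Jensen's formula to bound the zero-counting function by $n(r) = O(r^{\rho})$, thereby guaranteeing convergence of the canonical product and the estimate $\deg P \le \rho$ on the exponential polynomial. Once that theorem is granted, the dichotomy and the normalization are bookkeeping; the only step needing a little care is the lower bound $\mathrm{ord}(f) \ge \deg h$, since it requires exhibiting a direction in which $e^{h}$ actually realizes its maximal growth rather than merely bounding it from above.
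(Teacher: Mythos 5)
Your proof is correct. Note that the paper does not prove this lemma at all---it imports it wholesale as Theorem 16.13 of Bak--Newman---and your argument via Hadamard's factorization theorem (a finite zero set makes the canonical product, times $\lam^m$, a polynomial; normalize $h(\lam) := P(\lam) - P(0)$; then the upper bound $|g|e^{\mathrm{Re}\,h} \le e^{|\lam|^{d+\ep}}$ together with the lower bound along a ray where $c_d \lam^d > 0$ pins down $\mathrm{deg}(h) = \mathrm{ord}(f)$) is exactly the standard proof packaged by that citation, so it matches the paper's (implicit) route.
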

Thus, our strategy is to give a indirect proof: according to Lemma \ref{lem1}, if $f(\lam) = 0$ possesses at most finitely many roots in $\bC$, then $\mathrm{ord}(f) \le 1$ implies 
$$
f(\lam) = g(\lam) e^{c \lam}
$$ 
holds for all $\lam \in \bC$ where $g(\lam)$ is a complex polynomial  and $c \in \bC$. Then the main task is to seek conditions on the coefficients $A$ and $B$ to reach a contradiction.
\\
\\
\noindent \textbf{B. Single Complex Constant Delay}
\\
\\
In the following Proposition we apply our strategy carefully.
\begin{prop} \label{prop1}
Suppose $\mathrm{tr}(B) \neq 0$, then for each $\tau \in \bC \setminus \{0\}$, the equation
\begin{equation} \label{1eq1}
f(\lam) := \mathrm{det}\big( \lam \mathrm{id} - A - e^{- \tau \lam} B \big) = 0
\end{equation}
possesses infinitely many roots in $\bC$.
\end{prop}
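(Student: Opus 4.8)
The plan is to run the indirect strategy set up after Lemma~\ref{lem1} and to locate the contradiction in the coefficient of the first exponential harmonic of $f$. First I would assume, for contradiction, that \eqref{1eq1} has only finitely many roots. Since observation (ii) gives $\mathrm{ord}(f)\le 1$, Lemma~\ref{lem1} then forces
\[
f(\lam)=g(\lam)\,e^{c\lam}
\]
for some complex polynomial $g$ and some $c\in\bC$, the exponent being linear because $h(0)=0$ and $\mathrm{deg}(h)=\mathrm{ord}(f)\le 1$.

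Next I would expand $f$ as an exponential polynomial in the quantity $e^{-\tau\lam}$. Writing $C(\lam)=\lam\,\mathrm{id}-A$ and using multilinearity of the determinant in the columns, one obtains
\[
f(\lam)=\mathrm{det}\big(C(\lam)-e^{-\tau\lam}B\big)=\sum_{k=0}^{n}p_k(\lam)\,e^{-k\tau\lam},
\]
where each $p_k$ is a polynomial in $\lam$. Two of these coefficients carry all the information I need: the zeroth one, $p_0(\lam)=\mathrm{det}(\lam\,\mathrm{id}-A)$, is monic of degree $n$; and the first one, by Jacobi's formula, is $p_1(\lam)=-\mathrm{tr}\big(\mathrm{adj}(\lam\,\mathrm{id}-A)\,B\big)$.

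Now comes the step where the hypothesis $\tau\neq 0$ is exploited. The frequencies $0,-\tau,\dots,-n\tau$ are then pairwise distinct, and an exponential polynomial $\sum_\mu q_\mu(\lam)e^{\mu\lam}$ with distinct frequencies $\mu$ vanishes identically only if every $q_\mu\equiv 0$. Matching the identity $f(\lam)-g(\lam)e^{c\lam}\equiv 0$ against this rigidity, I would argue: if $c\notin\{-k\tau\}_{k=0}^{n}$ then the coefficient of $e^{c\lam}$ is $-g$, so $g\equiv 0$ and hence $p_0\equiv 0$, which is impossible; if $c=-j\tau$ for some $j$, then $p_k\equiv 0$ for all $k\neq j$ and $g=p_j$, and since $p_0\not\equiv 0$ this forces $j=0$. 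Either way I conclude $p_1\equiv 0$.

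Finally I would refute $p_1\equiv 0$ by computing its leading term. From the identity $(\lam\,\mathrm{id}-A)\,\mathrm{adj}(\lam\,\mathrm{id}-A)=\mathrm{det}(\lam\,\mathrm{id}-A)\,\mathrm{id}$, comparing top-degree coefficients in $\lam$ gives $\mathrm{adj}(\lam\,\mathrm{id}-A)=\lam^{n-1}\mathrm{id}+O(\lam^{n-2})$, so that $p_1(\lam)=-\mathrm{tr}(B)\,\lam^{n-1}+O(\lam^{n-2})$. The hypothesis $\mathrm{tr}(B)\neq 0$ makes $\mathrm{deg}(p_1)=n-1$, contradicting $p_1\equiv 0$ and completing the proof. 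I expect the main obstacle to be the third step: one must carefully justify the linear independence of exponential monomials with distinct frequencies and use $\tau\neq 0$ to guarantee that distinctness, since the whole argument hinges on isolating $p_1$ as a genuine nonzero harmonic. The determinant expansion and the adjugate leading-term computation are, by comparison, routine.
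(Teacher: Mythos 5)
Your proposal is correct, and it takes a genuinely different route from the paper's proof. The paper normalizes $\tau = 1$ by rescaling $\lam \mapsto \tau\lam$, writes $f(\lam) = \lam^n + a_1(\lam)e^{-\lam} + \dots + a_k(\lam)e^{-k\lam}$ with $a_k$ the last nonzero coefficient, and pins down the exponent in $f = g e^{c\lam}$ by hard asymptotics: sending $\lam \to -\infty$ along the real axis forces $\mathrm{Re}(c) = -k$, a second limit with $\mathrm{Im}(\lam) \to \pm\infty$ forces $\mathrm{Im}(c) = 0$, and finally $\lam \to +\infty$ yields $0 = \lim \lam^n$, the contradiction. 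You instead keep $\tau$ complex, expand $f = \sum_{k=0}^{n} p_k(\lam)e^{-k\tau\lam}$, and invoke linear independence of exponential monomials with distinct frequencies over $\bC[\lam]$ to match coefficients against $g e^{c\lam}$, concluding $p_1 \equiv 0$, which your adjugate computation $p_1(\lam) = -\mathrm{tr}(B)\lam^{n-1} + O(\lam^{n-2})$ (agreeing with the paper's $a_1$) refutes; your case analysis on whether $c$ lies in $\{-k\tau\}_{k=0}^{n}$ is sound, with $p_0$ monic ruling out both $g \equiv 0$ and $j \neq 0$. The independence lemma you rightly flag as the main obstacle is standard and does hold for arbitrary distinct \emph{complex} frequencies: multiply by $e^{-\mu_1\lam}$, differentiate $\deg q_{\mu_1} + 1$ times to kill the first polynomial coefficient while preserving the form with distinct nonzero frequency differences, and induct on the number of terms. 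As for what each approach buys: the paper's limit arguments are elementary but depend on the exponents being real after scaling --- which is precisely why its Proposition \ref{prop2} restricts to real delays --- whereas your algebraic rigidity needs no normalization, no identification of a ``last'' nonzero coefficient, and in fact proves the stronger statement that $f$ has infinitely many roots as soon as two distinct harmonics $p_j$, $p_k$ are nonzero (admissibility in the paper's later terminology), which would extend Proposition \ref{prop2} to complex delays.
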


\begin{proof}[\textbf{Proof.}] 
Setting $ \lam \mapsto \tau \lam$, without loss of generality we consider $\tau = 1$. The equation (\ref{1eq1}) can be expressed as
\begin{equation} \label{2}
f(\lam) := \lam^n + a_1(\lam) e^{-\lam} + ... + a_n(\lam) e^{-n \lam},
\end{equation}
where 
$$
a_1(\lam) = - (\mathrm{tr}(B)) \lam^{n-1} + \mbox{lower order terms}
$$ 
is a nonzero polynomial since we assume $\mathrm{tr}(B) \neq 0$. Obviously all other $a_j(\lam)$ for $j \in \{2,...,n\}$ are (maybe identically zero) complex polynomials. Since $a_1(\lam)$ is nonzero, there exist $k \in \bN$ with $1 \le k \le n$ such that $a_k(\lam)$ is the \textit{last} (with respect to the order as real numbers in the exponential exponents) nonzero polynomial, i.e.
\begin{equation} \label{1}
f(\lam) = \lam^n + a_1(\lam) e^{-\lam} + ... + a_k(\lam) e^{-k \lam}.
\end{equation}
Obviously $f$ is an entire function. We easily see that $\mathrm{ord}(f) \le 1$, because for each $\ep >0$, using triangle inequality, the estimates
\begin{equation} \label{order1}
|f(\lam)| \le (k+1) \max_{j = 1,...,k}\{ 1, |a_j(\lam)| \} e^{k |\lam|} \le e^{|\lam|^{1+\ep}}
\end{equation}
hold as $|\lam|$ is sufficiently large. 
\paragraph{Contradiction Part:} Suppose the contrary that $f(\lam) = 0$ possesses at most finitely many roots in $\bC$. Since $f$ is entire and $\mathrm{ord}(f) \le 1$, by Lemma \ref{lem1},
$$
f(\lam) = g(\lam) e^{c \lam}
$$
holds for all $\lam \in \bC$ where $g(\lam)$ is a complex polynomial and $c \in \bC$. We claim that 
\begin{equation} \label{contradiction}
\mathrm{Re}(c) = -k, \quad \mathrm{Im}(c) = 0.
\end{equation}
\indent Let $z_l \lam^l$ ($0 \le l \le n$) be the leading term of $a_k(\lam)$. Multiplying (\ref{1}) by $e^{k\lam}/\lam^l$ yields
\begin{equation} \label{3}
\frac{ g(\lam) }{\lam^l} e^{i \mathrm{Im}(c) \lam} e^{(\mathrm{Re}(c) + k) \lam} = \frac{ \lam^n e^{k \lam} + a_1(\lam)e^{(k-1)\lam} + ... a_{k-1}(\lam) e^{\lam}}{\lam^{l}} + z_l +  \frac{\tilde{a}_k(\lam)}{\lam^l} 
\end{equation}
where $\mathrm{deg}(\tilde{a}_k) < l$. Taking $\lam \in \bR$ and $\lam \ra -\infty$, since $g$, all $a_j$, and $\tilde{a}_k$ are polynomials, the right-hand side of (\ref{3}) converges to $z_l$, while the left-hand side of (\ref{3}) diverges to infinity (resp. to zero) if $\mathrm{Re}(c) + k < 0$ (resp. $\mathrm{Re}(c) + k > 0$). Thus $
\mathrm{Re}(c) + k = 0$. We now have
\begin{equation} \label{p13}
\frac{ g(\lam) }{\lam^l} e^{i \mathrm{Im}(c) \mathrm{Re}(c)} e^{- \mathrm{Im}(c) \mathrm{Im}(\lam)} = \frac{ \lam^n e^{k \lam} + a_1(\lam)e^{(k-1)\lam} + ... a_{k-1}(\lam) e^{\lam}}{\lam^{l}} + z_l +  \frac{\tilde{a}_k(\lam)}{\lam^l}. 
\end{equation}
Again we play the same trick by taking $\mathrm{Re}(\lam) \ra -\infty$ and $\mathrm{Im}(\lam) \ra \infty$ (or $-\infty$, it does not matter),  we see $\mathrm{Im}(c) = 0$. As a result, (\ref{1}) becomes 
$$
g(\lam) e^{-k \lam} = \lam^n + a_1(\lam) e^{-\lam} + ... + a_k(\lam) e^{-k \lam}.
$$
At last taking $\lam \in \bR$ and $\lam \ra \infty$ we have
$$
0 = \lim_{\lam \in \bR, \, \lam \ra \infty} \lam^n,
$$
which is a contradiction. The proof is complete.
\end{proof}
%
\noindent \textbf{C. Multiple Real Constant Delays}
\\
\\
We consider the linear (multiple, real, constant) time-delay complex differential systems:
\begin{equation} \label{dde2}
\dot{x}(t) = Ax(t) + \sum_{j =1}^k B_j x(t-\tau_j), \quad x(t) \in \bC^n
\end{equation} 
for integer $j \ge 2$ and $-\infty < \tau_1 < \tau_2 < ... < \tau_k < \infty$. The characteristic equation is given by 
\begin{equation} \label{multiple}
\mathrm{det}\big(\lam \mathrm{id} - A - \sum_{j=1}^k B_j e^{-\tau_j \lam}\big) = 0,
\end{equation}
which is a special case of the general \textbf{\textit{quasi-polynomials}}
\begin{equation} \label{form}
f(\lam) := \sum_{(\al_0, \al_1, ..., \al_k) \in \bN^{k+1} \cup \{0\}} a_{\al_0, \al_1,...,\al_k} \lam^{\al_0} e^{- ( \sum_{j=1}^k \al_j \tau_j) \lam}
\end{equation}
where only finitely many $a_{\al_0, \al_1,...,\al_k} \in \bC$ are nonzero. Denote $\boldsymbol{\tau} := (\tau_1, ..., \tau_k)$ and $\boldsymbol{\al} := (\al_1,..., \al_k)$. We call $f$ is \textbf{\textit{admissible}} if there exist $\boldsymbol{\al^1}$ and $\boldsymbol{\al^2}$ such that 
$$
 \boldsymbol{\al^1} \cdot \boldsymbol{\tau} \neq \boldsymbol{\al^2} \cdot \boldsymbol{\tau} 
$$
and there exist $\al_0^1, \al_0^2 \in \bN \cup \{0\}$ such that 
$$
a_{\al_0^1, \boldsymbol{\al^1}} \neq 0, \quad a_{\al_0^2, \boldsymbol{\al^2}} \neq 0.
$$
In other words, $f(\lam)$ possesses two different exponential exponents.
\begin{prop} \label{prop2}
Let $f$ be defined in (\ref{form}), then $f(\lam) = 0$ possesses infinitely many roots in $\bC$ if and only if $f$ is admissible. 
\end{prop}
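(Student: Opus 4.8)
The plan is to prove the two implications separately, with the reverse direction (``admissible $\Ra$ infinitely many roots'') carrying all the weight and reusing the machinery of Proposition \ref{prop1}. Throughout I assume $f \not\equiv 0$, i.e.\ at least one coefficient is nonzero; this is automatic whenever $f$ is admissible, and it excludes only the degenerate empty quasi-polynomial.

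First I would dispose of the easy direction by contraposition. If $f$ is \emph{not} admissible, then every nonzero coefficient $a_{\al_0, \boldsymbol{\al}}$ carries a multi-index $\boldsymbol{\al}$ with one and the same exponent value $s := \boldsymbol{\al} \cdot \boldsymbol{\tau}$. Collecting these terms gives $f(\lam) = p(\lam)\, e^{-s\lam}$ for a single nonzero polynomial $p$. Since $e^{-s\lam}$ never vanishes, the zero set of $f$ coincides with that of $p$ and is therefore finite. This is exactly the contrapositive ``not admissible $\Ra$ finitely many roots'' of one of the two implications.

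For the substantial direction, suppose $f$ is admissible. Exactly as in the estimate (\ref{order1}), the triangle inequality applied to the finitely many terms of (\ref{form}) yields $\mathrm{ord}(f) \le 1$. Arguing by contradiction, if $f$ had only finitely many roots, then Lemma \ref{lem1} would force $f(\lam) = g(\lam)\,e^{c\lam}$ for some polynomial $g$ and some $c \in \bC$, where $g \not\equiv 0$ because admissibility guarantees $f \not\equiv 0$ (the functions $\lam^{\al_0} e^{-(\boldsymbol{\al} \cdot \boldsymbol{\tau})\lam}$ being linearly independent). Now I would group the terms of $f$ by their exponential exponent and write
\[
f(\lam) = \sum_{m=1}^{M} p_m(\lam)\, e^{-s_m \lam}, \qquad s_1 < s_2 < \dots < s_M,
\]
with real exponents $s_m$ and nonzero polynomials $p_m$; admissibility is precisely the assertion $M \ge 2$. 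Dividing $f = g\,e^{c\lam}$ by $e^{c\lam}$ gives $g(\lam) = \sum_{m=1}^{M} p_m(\lam)\, e^{-(s_m + c)\lam}$. Restricting to real $\lam = t$ and factoring out the unimodular $e^{-i\,\mathrm{Im}(c)t}$, I would examine the competing real rates $-(s_m + \mathrm{Re}(c))$. Because these rates are distinct, as $t \ra +\infty$ the term of largest rate (the one with smallest $s_m$, namely $s_1$) dominates, while as $t \ra -\infty$ the term with $s_M$ dominates; in each case the sum is asymptotic to a single nonzero polynomial times a genuine exponential. Since $g$ is a polynomial, neither asymptotic may grow or decay exponentially, which forces $\mathrm{Re}(c) = -s_1$ from the first limit and $\mathrm{Re}(c) = -s_M$ from the second. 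As $s_1 < s_M$, this is the desired contradiction, and it is worth noting that $\mathrm{Im}(c)$ never needs to be pinned down.

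The main obstacle is the rigorous justification that the top exponential rate cannot be cancelled and genuinely governs the asymptotics: namely, that a finite sum $\sum_m q_m(t)\, e^{\mu_m t}$ with distinct real $\mu_m$ and nonzero polynomials $q_m$ grows like its largest-rate term, so that it stays polynomially bounded only if that top rate is $0$ and tends to $0$ only if every rate is negative. This growth-separation (linear independence) fact is the engine of the whole argument; once it is established, the two one-sided real-axis limits deliver the contradiction immediately, making this direction even shorter than the corresponding part of Proposition \ref{prop1}.
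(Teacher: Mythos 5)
Your proof is correct, and while it follows the paper's skeleton --- $\mathrm{ord}(f) \le 1$, Lemma \ref{lem1} giving $f(\lam) = g(\lam)e^{c\lam}$, and asymptotics along the real axis --- the way you extract the contradiction is genuinely different and in one respect cleaner. The paper first divides out the top exponential to reduce to $\tilde f(\lam) = a_h(\lam) + \dots + a_l(\lam)e^{-(\boldsymbol{\al^l}\cdot\boldsymbol{\tau} - \boldsymbol{\al^h}\cdot\boldsymbol{\tau})\lam}$, then imports the full claim from Proposition \ref{prop1}'s contradiction part, namely both $\mathrm{Re}(c) = -(\boldsymbol{\al^l}\cdot\boldsymbol{\tau} - \boldsymbol{\al^h}\cdot\boldsymbol{\tau})$ \emph{and} $\mathrm{Im}(c) = 0$ (the latter via the somewhat delicate simultaneous limit $\mathrm{Re}(\lam) \ra -\infty$, $\mathrm{Im}(\lam) \ra \infty$), and only then contradicts at $\lam \ra +\infty$ via $0 = \lim a_h(\lam)$. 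You instead work with $f = g\,e^{c\lam}$ directly, take moduli along the real axis so that the unimodular factor $e^{-i\,\mathrm{Im}(c)t}$ --- common to every term --- drops out, and derive two incompatible values $\mathrm{Re}(c) = -s_1$ (from $t \ra +\infty$) and $\mathrm{Re}(c) = -s_M$ (from $t \ra -\infty$), never needing to pin down $\mathrm{Im}(c)$ at all; this makes the argument shorter and more symmetric than the paper's. The growth-separation fact you flag as the remaining obstacle is not a genuine gap: divide by the top exponential $e^{\mu_1 t}$, note each $q_m(t)e^{(\mu_m - \mu_1)t} \ra 0$ for $m \ge 2$ so the sum is $q_1(t) + o(1)$, and observe that a nonzero polynomial cannot tend to $0$ --- two lines. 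One caveat you share with the paper: both proofs implicitly assume that after grouping terms with equal exponent $\boldsymbol{\al} \cdot \boldsymbol{\tau}$ the resulting polynomials are nonzero (your $p_1, p_M$; the paper's $a_h, a_l$), whereas admissibility as defined does not rule out cancellation between distinct multi-indices $\boldsymbol{\al}$ sharing the same dot product, so strictly the hypothesis should be read off the grouped, reduced form of $f$; since the paper's own proof makes the identical tacit assumption, this does not count against your attempt.
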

\begin{proof}[\textbf{Proof.}]
Assume $f$ is not admissible, then $f(\lam) = 0$ is equivelent to a polynomial equation, which possesses at most finitely many roots in $\bC$.
\\
\indent Conversely, assume $f$ is admissible. Obviously $f$ is an entire function and $\mathrm{ord}(f) \le 1$. Since all $\tau_j$ are real, the terms of $f(\lam)$ can be sorted by the order as real numbers in the exponential exponents. Hence if $f$ is admissible, then 
\begin{equation} \label{2eq}
f(\lam) = a_h(\lam) e^{- (\boldsymbol{\al^h} \cdot \boldsymbol{\tau}) \lam} + ... + a_l(\lam) e^{- (\boldsymbol{\al^l} \cdot \boldsymbol{\tau}) \lam}
\end{equation}
holds where $a_h(\lam)$ and $a_l(\lam)$ are nonzero complex polynomials and $- (\boldsymbol{\al^h} \cdot \boldsymbol{\tau}) > - (\boldsymbol{\al^l} \cdot \boldsymbol{\tau})$ are two different real numbers.
Therefore, $f(\lam) = 0$ is equivalent to the equation
$$
\tilde{f}(\lam) = a_h(\lam) + ... + a_l(\lam) e^{-(\boldsymbol{\al^l } \cdot \boldsymbol{\tau} -\boldsymbol{\al^h } \cdot \boldsymbol{\tau}) \lam } = 0.
$$
\paragraph{Contradiction Part:} Suppose the contrary that $\tilde{f}(\lam) = 0$ possesses at most finitely many roots in $\bC$, then 
\begin{equation} \label{prop2eq}
\tilde{f}(\lam) = g(\lam) e^{c \lam}
\end{equation}
holds for all $\lam \in \bC$. Now we notice that the claim in the Contradiction Part of the previous Proposition:
$$
\mathrm{Re}(c) =-(\boldsymbol{\al^l } \cdot \boldsymbol{\tau} -\boldsymbol{\al^h } \cdot \boldsymbol{\tau}), \quad \mathrm{Im}(c)= 0,
$$
holds if we assume all $\tau_j$ are real. Therefore, taking $\lam \in \bR$ and $\lam \ra \infty$ in (\ref{prop2eq}), we have
$$
0 = \lim_{\lam \in \bR, \, \lam \ra \infty} a_h(\lam),
$$
a contradiction. The proof is complete.
\end{proof}
\begin{rmk}
The assumption $\mathrm{tr}(B) \neq 0$ is just a sufficient condition of Proposition \ref{prop1}, but it is the unique sufficient condition that is irrelevant to $A$. 
\end{rmk}
\begin{rmk}
It is interesting to seek sufficient conditions for $f$ in (\ref{multiple}) being admissible. For instance Pontryagin's condition that $f$ is without the principal term, see \cite{Po55}. Another sufficient condition is that $\tau_j$ are linearly independent over $\bZ$, i.e. 
$$
\boldsymbol{\beta} \cdot \boldsymbol{\tau} =0, \quad \boldsymbol{\beta}\in \bZ^k \quad \mbox{implies} \quad \boldsymbol{\beta} = \textbf{0}.
$$
and one of $B_j$ is of trace zero.
\end{rmk}
%
%
\noindent \textbf{D. Single Real Distributed Delay}
\\
\\
We consider a linear (single, real, distributed) time-delay complex differential equation:
\begin{equation} \label{d.eq}
\dot{x}(t) = ax(t) + \int_0^{\tau} M(\theta) x(t - \theta) d \theta, \quad x(t) \in \bC.
\end{equation} 
where $a \in \bC$, $\tau > 0$, and $M \in C^0([0, \tau], \bC)$. The characteristic equation of (\ref{d.eq}) is given by
$$
f(\lam) := \lam - a - \int_0^{\tau} M(\theta) e^{-\lam \theta} d \theta = 0
$$
\begin{prop} \label{prop3}
$f(\lam) = 0$ possesses infinitely many roots in $\bC$ if and only if $M$ is not identically zero.
\end{prop}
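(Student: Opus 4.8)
The plan is to follow the same indirect strategy used in Propositions \ref{prop1} and \ref{prop2}. First I would dispose of the easy direction by contraposition: if $M \equiv 0$ then $f(\lam) = \lam - a$ has the single root $\lam = a$, so finitely many roots, which settles the ``only if'' implication. For the converse I assume $M$ is not identically zero. As before $f$ is entire (the integrand $M(\theta)e^{-\lam\theta}$ is entire in $\lam$ and dominated on compact $\lam$-sets, so one may differentiate under the integral), and $\mathrm{ord}(f) \le 1$, since $\big| \int_0^{\tau} M(\theta)e^{-\lam\theta}\,d\theta \big| \le \|M\|_\infty\,\tau\,e^{\tau|\lam|}$ gives $|f(\lam)| \le |\lam| + |a| + \|M\|_\infty\,\tau\,e^{\tau|\lam|} \le e^{|\lam|^{1+\ep}}$ for $|\lam|$ large. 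Invoking Lemma \ref{lem1}, if $f(\lam)=0$ had at most finitely many roots then $f(\lam) = g(\lam)e^{c\lam}$ for some complex polynomial $g$ and some $c \in \bC$.

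The key new analytic input, replacing the sorting-of-exponents step of the quasi-polynomial case, is the decay of the integral term along the positive real axis: by dominated convergence (the integrand tends to $0$ pointwise for $\theta > 0$ and is bounded by $\|M\|_\infty$), one has $\int_0^{\tau} M(\theta)e^{-\lam\theta}\,d\theta \ra 0$ as $\lam \ra +\infty$ with $\lam \in \bR$. Consequently $f(\lam) = \lam - a - o(1) \sim \lam$ along the positive reals.

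Comparing this with $f(\lam)=g(\lam)e^{c\lam}$, testing the growth as $\lam \ra +\infty$ along $\bR$ (exactly as in the Contradiction Part of Proposition \ref{prop1}) forces $\mathrm{Re}(c)=0$; then on the real axis $|f(\lam)| = |g(\lam)| \sim \lam$ shows $\mathrm{deg}(g)=1$, and since $f(\lam) - \lam \ra -a$ is bounded while $g(\lam)e^{i\,\mathrm{Im}(c)\lam} - \lam$ has unbounded oscillation whenever $\mathrm{Im}(c) \neq 0$, I would conclude $\mathrm{Im}(c)=0$. Hence $c=0$ and $f=g$ is in fact a polynomial, so that $\int_0^{\tau} M(\theta)e^{-\lam\theta}\,d\theta = \lam - a - g(\lam)$ is a polynomial in $\lam$.

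It then remains to reach a contradiction. Since this polynomial also tends to $0$ as $\lam \ra +\infty$, it must be identically zero, i.e. $\int_0^{\tau} M(\theta)e^{-\lam\theta}\,d\theta \equiv 0$ on $\bC$. Expanding at $\lam=0$ shows that every moment $\int_0^{\tau} M(\theta)\theta^m\,d\theta$ vanishes; approximating $\overline{M}$ uniformly by polynomials via the Weierstrass approximation theorem then yields $\int_0^{\tau} |M(\theta)|^2\,d\theta = 0$, whence $M \equiv 0$, contradicting our assumption. I expect this last passage—upgrading ``the Laplace transform of $M$ is a polynomial'' to $M \equiv 0$—to be the main obstacle, since unlike the finite quasi-polynomials of Sections B and C the transform here is a genuine integral, so one must invoke injectivity of the Laplace transform (equivalently, completeness of the polynomials in $C^0([0,\tau])$) rather than merely match finitely many exponential exponents.
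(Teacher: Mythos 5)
Your proof is correct, and while it follows the paper's general blueprint (entirety, $\mathrm{ord}(f)\le 1$, Lemma \ref{lem1} giving $f(\lam)=g(\lam)e^{c\lam}$, forcing $c=0$, then injectivity of the integral transform), it differs from the paper's proof at both analytic steps, in ways worth recording. First, to pin down $c$ the paper sandwiches $|f|$ as in (\ref{growth}) and sends $\lam\ra-\infty$; you instead work entirely along the positive real axis, using dominated convergence to get $\int_0^\tau M(\theta)e^{-\lam\theta}\,d\theta\ra 0$, hence $f(\lam)\sim\lam$ as $\lam\ra+\infty$, which forces $\mathrm{Re}(c)=0$, and then $\mathrm{Im}(c)=0$ from the non-convergence of $e^{i\,\mathrm{Im}(c)\lam}$ on $\bR$. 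Your one-sided route is not just cleaner but safer: since $|e^{-\lam\theta}|=e^{-\theta\,\mathrm{Re}(\lam)}$, the integral term in (\ref{growth}) is bounded by $\del\, e^{-\tau\,\mathrm{Re}(\lam)}$ rather than $\del\, e^{\tau\,\mathrm{Re}(\lam)}$, so as $\lam\ra-\infty$ that term can itself grow exponentially and the paper's claim there that $|f(\lam)|$ cannot grow exponentially needs repair — your argument never enters that regime. Second, once $c=0$ makes $\int_0^\tau M(\theta)e^{-\lam\theta}\,d\theta$ a polynomial, you observe it tends to $0$ at $+\infty$ and is therefore identically zero (note this makes your intermediate step $\mathrm{deg}(g)=1$ dispensable, whereas the paper needs the linearity of $g$ for (\ref{zero})), and you conclude $M\equiv 0$ from the vanishing of all moments via Weierstrass approximation; the paper instead differentiates (\ref{zero}) twice and invokes uniqueness of Fourier series to kill $\theta^2 M(\theta)$. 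Both injectivity arguments are legitimate: yours is more elementary and self-contained, while the paper's is quicker if one grants Fourier uniqueness for continuous functions on $[0,\tau]$.
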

\begin{proof}[\textbf{Proof.}]
Assume $M$ is identically zero, then $f(\lam) = 0$ possesses the unique root $\lam = a$.
\\
\indent Conversely, assume $M$ is not identically zero. Suppose the contrary that $f$ possesses at most finitely many roots in $\bC$. Obviously $f$ is an entire function and $\mathrm{ord}(f) \le 1$, then by Lemma \ref{lem1}, 
$$
f(\lam) = g(\lam) e^{c \lam}
$$
holds for all $\lam \in \bC$. Define $\del := \tau \|M\|_{C^0} > 0$, then using triangle inequality,
\begin{equation} \label{growth}
|\lam - a| - \del e^{\tau \mathrm{Re}(\lam)} \le |f(\lam)| = |g(\lam)| e^{\mathrm{Re}(c) \mathrm{Re}(\lam) - \mathrm{Im}(c)\mathrm{Im}(\lam)} \le |\lam - a| + \del e^{\tau \mathrm{Re}(\lam)}.
\end{equation}
Taking $\lam \in \bR$ and $\lam \ra -\infty$, we see $|f(\lam)|$ cannot grow exponentially, hence $\mathrm{Re}(c) = 0$. Similarly, taking $\mathrm{Re}(\lam) \ra -\infty$ and $\mathrm{Im}(\lam) \ra \infty$ (or $-\infty$, it does not matter), we have $\mathrm{Im}(c) = 0$. Now that $c = 0$, the growth constraint (\ref{growth}) of $|g(\lam)|$ also implies that $g(\lam)$ is linear. Therefore there exist $p, q \in \bC$ such that 
\begin{equation} \label{zero}
\int_0^{\tau} M(\theta) e^{-\lam \theta} d \theta = p \lam + q
\end{equation}
holds for all $\lam \in \bC$. To reach a contradiction, we differentiate (\ref{zero}) twice to obtain
$$
\int_0^{\tau} \theta^2 M(\theta) e^{-\lam \theta} d \theta = 0.
$$
Since $M$ is continuous, by using Fourier series, we have $\theta^2M(\theta) = 0$ for all $\theta \in [0, \tau]$. Thus $M$ is identically zero, a contradiction. The proof is complete.
\end{proof}



\end{document}